\newtheorem{thm}{Theorem}
\newtheorem{lemma}{Lemma}
\newtheorem{conj}{Conjecture}
\newtheorem*{thm-others}{Theorem}
\newenvironment{customthm}[1]
  {\innercustomthm}
  {\endinnercustomthm}
\theoremstyle{remark}
\newcommand{\RR}{\mathbb{R}}
\newcommand{\ZZ}{\mathbb{Z}}
\newcommand{\emd}{\textemdash \hspace{4pt}}
\newcommand{\ol}{\overline}
\newcommand{\p}{\partial}
\title{Searching for the maximal valence of harmonic polynomials: a new example}
\author{Seung-Yeop Lee, Andres Saez}
\begin{document}

\begin{abstract}

We find a new lower bound for the maximal number of zeros to harmonic polynomials, $p(z)+\ol{q(z)}$, when $\deg p = n$ and $\deg q = n-2$.  

\end{abstract}

\maketitle

\section{Introduction and result}

Given two polynomials $p(z)$ and $q(z)$ of degrees $n$ and $m$ respectively, the maximal number of roots (i.e. maximal valence) of the {\em harmonic polynomial}, $p(z)+\overline {q(z)}$, is not known \cite{K-N2} except for a few cases (e.g. when $m=n-1$ \cite{Wilm94,Wilm98} and when $m=1$ \cite{G2008,K-S}).  See also \cite{LSL,BHJR,BHS1995,BL2004,BL2010,PSch1996}.
Recently there have been several results \cite{KLS,HLLM,LLL} on the {\em lower bounds} of the maximal valence, see Table \ref{table1}.   

\begin{table}[ht]
\caption{The known maximal valence of $p+\overline q$ \label{table1}}
\begin{tabular}{c|ccccc}
\hline\hline 
$(\deg p,\deg q)$  &  $(n,m)$   & $(n,n-1)$ & $(n,n-3)$ & $(n,1)$ 
\\\hline
\text{maximal valence} &  $\geq m^2 + m+n$  & $n^2$ & $\geq n^2-3n+{\mathcal O}(1) $
& $3n-2$ 
\\\hline
\end{tabular}
\end{table}

In this paper we suggest a new lower bound of the maximal valence when $(\deg p,\deg q)=(n,n-2)$, by studying specific harmonic polynomials defined below.  

Given a positive integer $n$ let us define two polynomials,
$p(z)= S(z)+T(z)$ and $q(z)= S(z)-T(z)$, where
\begin{equation}\label{ST} S(z)=i z^n,\qquad T(z)=i\big(z+1\big)^{n-1}\big(z-(n-1)\big).
\end{equation}
It follows that $\deg p=n$ and $\deg q=n-2$.  Since the maximal valence for $(n,m)=(3,1)$ is known (see the above table), we only consider $n\geq 4$ in this paper.

\begin{customthm}{\!\!} 
Given $n\geq 4$,
let the polynomials $p$ and $q$ be given as above.
Let $k_\text{max}(n)$ be defined by
$$ k_\text{max}(n)=\max_{1\leq k\leq n/2}\left\{k:(n-2)\cot\left(\frac{2k-1}{2n-4}\pi\right) - n \cot\left(\frac{\pi k}{n}\right)>0\right\}. $$
Then the total number of zeros, counting the multiplicity, of $p(z)+\overline{q(z)}$ is given by
$$n^2-2n+2 + 4 k_\text{max}(n). $$
The asymptotic behavior of $k_\text{max}(n)$ as $n\to\infty$ is given by
$$  k_\text{max}(n) = \left(\frac{1}{4}-\frac{X}{2\pi}\right) n  +            {\mathcal O}\left(1\right)\approx 0.13237 n + {\mathcal O}\left(1\right)  $$ where 
$X\approx 0.73908513321516$ is the unique solution to the equation $X = \cos X$. 
\end{customthm}

\begin{table}[ht]
\caption{The number of zeros, $n^2-2n+2 + 4 k_\text{max}(n)$, of $p + \ol{q}$ for small $n$'s}
\begin{tabular}{ c | c }
\hline\hline
$n$ & Number of zeros\\ \hline 
4&10\\ \hline
5&17\\ \hline
6&26\\ \hline
7&37\\\hline
8&54\\\hline
9&69\\\hline
10&86\\\hline
11&105\\\hline
12&126\\\hline
13&149\\\hline
14&174\\\hline
15&201\\\hline
16&234\\\hline
17&265\\\hline
18&298\\\hline
19&333\\\hline

\end{tabular}
\quad
\begin{tabular}{ c | c }
\hline \hline
$n$ & Number of zeros \\ \hline
20&370 \\ \hline
21&409\\\hline
22&450\\\hline
23&497\\\hline
24&542\\\hline
25&589\\\hline
26&638\\\hline
27&689\\\hline
28&742\\\hline
29&797\\\hline
30&854\\\hline
31&917\\\hline
32&978\\\hline
33&1041\\\hline
34&1106\\\hline
35&1173\\\hline

\end{tabular}
\end{table} \vspace{6pt}

\noindent{\bf Remark 1.}
For general $n$ and $m$, there exists a conjecture by Wilmshurst \cite{Wilm94} on the largest valence of the harmonic polynomials.
Though the conjecture has been disproved \cite{LLL,HLLM} for a number of cases, it has not been checked for many other cases including the case considered in this paper.
Our theorem says that the maximal valence is greater at least by $$4 k_\text{max}(n) -2 \approx 0.52948 n +{\mathcal O}(1) \text{~~~ as $n\to\infty$} $$ than the conjectured value of $n^2-2n+4$.   Our theorem also improves upon the more recent conjecture by the authors that suggests $n^2 - 3n/2 + {\mathcal O}(1)$ for the asymptotic maximal valence as $n$ grows to the infinity.  In fact, the currect project is motivated by the latter conjecture.

\bigskip
\noindent{\bf Remark 2.}
The specific harmonic polynomials that we consider in this paper are not new.  The same polynomials appeared in \cite{LLL}.  However, instead of obtaining a {\em lower bound} on the number of roots, here we obtain the {\em exact} number of roots for the given polynomials.  (If one naively applies the method in \cite{LLL} one would only get $n^2-2n+2$ as a lower bound.)   It is curious whether the similar analysis (i.e. exact counting) can be done for the case of $m=n-3$ that was considered in \cite{LLL}.

\begin{figure}[h]
\includegraphics[scale=0.35]{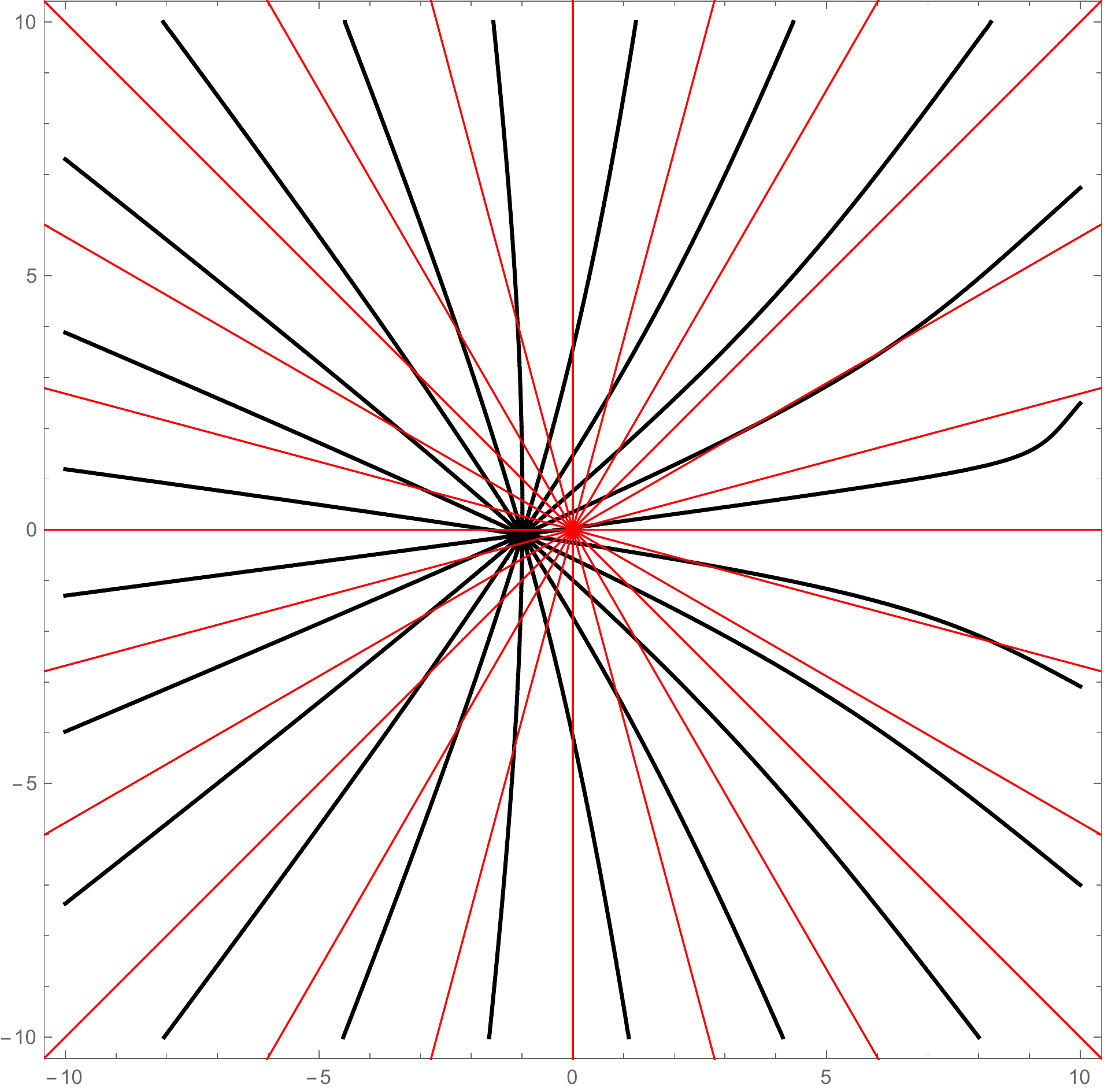}
\caption{\label{fig:2} The zero sets of $\mathrm{Im}\,T$ (black) and $\mathrm{Re}\,S$ (red) as defined in Remark 3.}
\end{figure}

\bigskip
\noindent{\bf Remark 3.} We note that our valence, $n^2-2n+2+4k_\text{max}(n)$, is not be the maximal valence.   In fact, for some $n$'s, we could find harmonic polynomials with higher valence.   The example shown in Figure \ref{fig:2} is generated by defining $p(z)=S(z) + T(z)$ and $q(z) = S(z) - T(z)$, where $S(z)=i z^{12}$ and $T(z)=i(z+e^{\frac{i}{10}})^{11}(z-11e^{\frac{i}{10}})$.  The number of zeros is $128$, two more than $126$. We conjecture that that the maximal valence is either $n^2-2n+2+4k_\text{max}(n)$ or $n^2-2n+4+4k_\text{max}(n)$ depending on $n$.

\bigskip
\noindent{\bf Remark 4.}  We conjecture that, given the degrees, $n=\deg p$ and $m=\deg q$, there exists no polynomial formula in $n$ and $m$ that gives the maximal valence of the harmonic polynomial $p+\overline q$ for all (except possibly for a {\em finite} number of cases) $n$ and $m$.
If there exists such a polynomial formula, say $P(n,m)$, then for $m=n-2$, we must have $P(n,n-2)=n^2 + A n + B$ for some constants $A$ and $B$.  The constants $A$ and $B$ must be intergers since $P(n,n-2)$ must be an integer for each $n$.   Our theorem indicates that the only possibilities are either $A=0$ or $A=-1$.  To match the known cases, $P(3,1)=7$ and $P(4,2)\in\{12,14,16\}$\footnote{For $(n,m)=(4,2)$ the maximal valence achieved so far is 12.  The maximal valence for even $n$ needs to be even due to the argument principle.  See, for example, \cite{KLS}.}, one must have $A=0$ and $B=-2$.   And this gives $P(n,n-2)=n^2-2$ which is unlikely based on the known data.

\bigskip
\noindent{\bf Acknowledgement.}  The authors would like to thank Prof. Dmitry Khavinson and Prof. Catherine B\'en\'eteau for the motivating discussions in the early stage of the project. The first author was supported by Simons Collaboration Grants for Mathematicians. 

\section{Proof}

\begin{figure}

\includegraphics[width=0.40\textwidth]{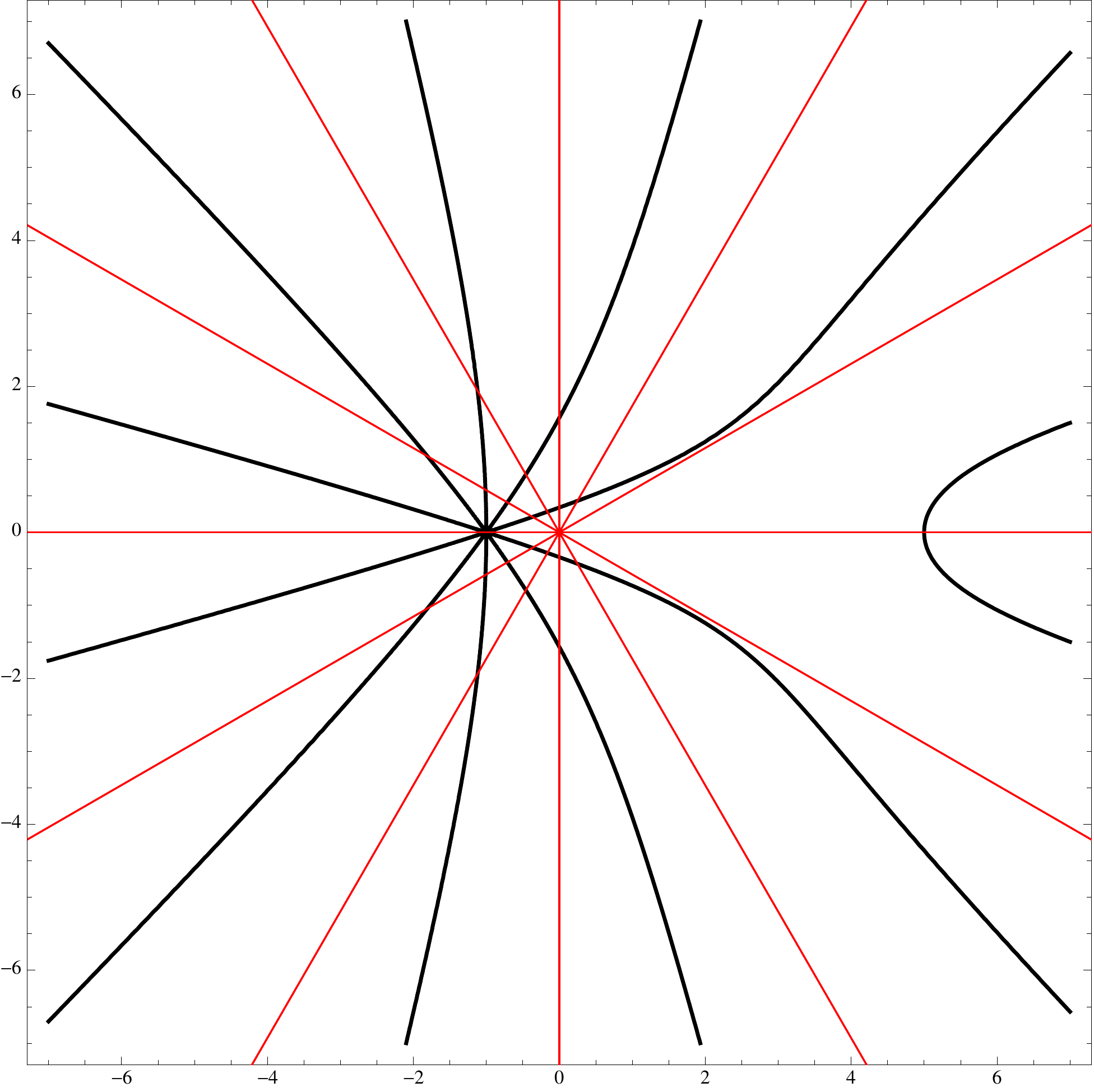}
\includegraphics[width=0.40\textwidth]{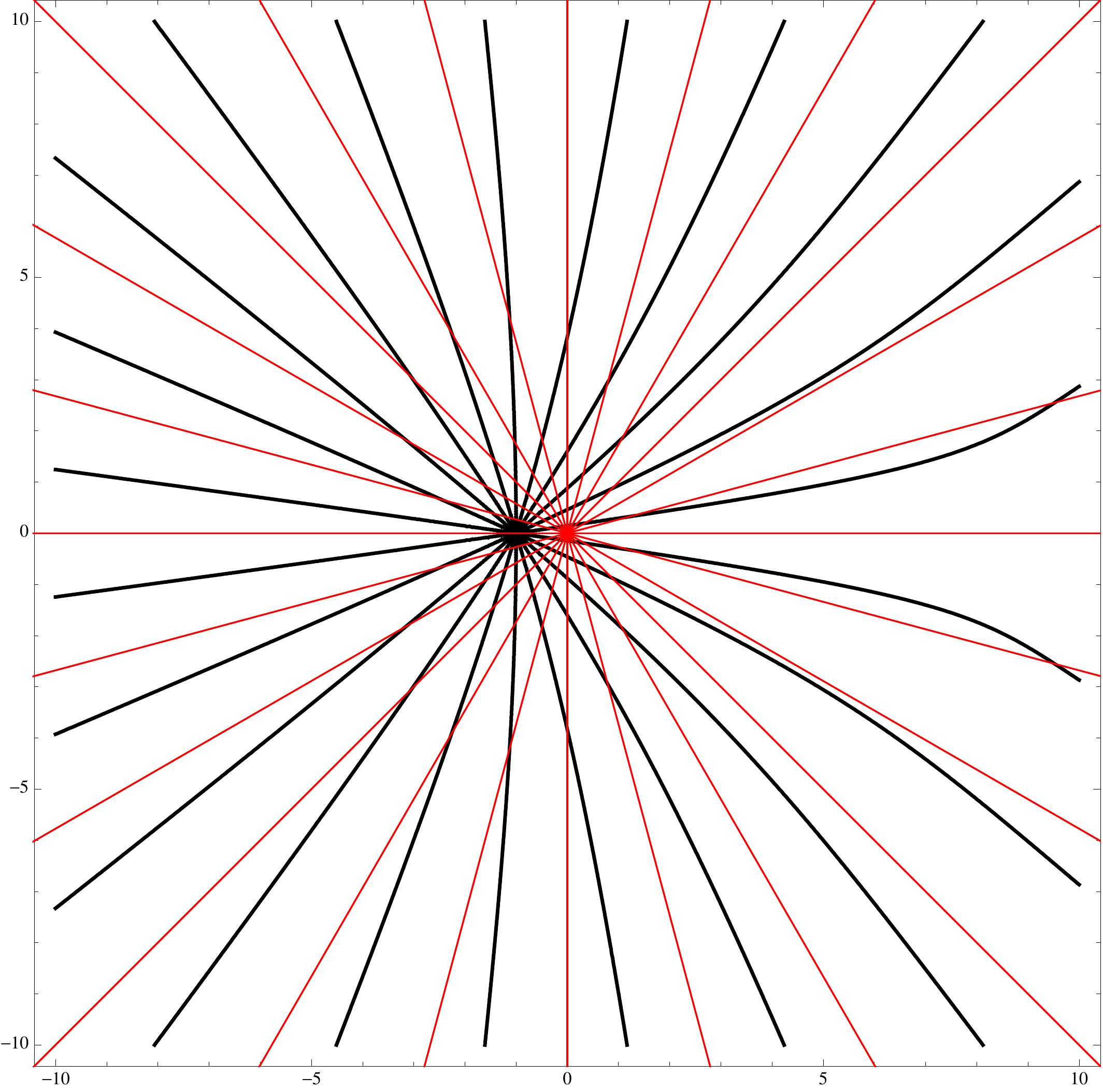}

\caption{\label{fig:1} The zero sets of ${\rm Re}\,S$ (red) and ${\rm Im}\,T$ (black) for $n=6$ and $12$ (from left)}
\end{figure}

We assume $n\geq 4$ throughout this section.

From \eqref{ST} above, we have
 $ p(z)+\overline{q(z)}=2 \,{\rm Re}\,S(z)+2 i \,{\rm Im} \,T(z)$.
Therefore, the zeros of $p(z)+\overline{q(z)}$ are exactly the intersection points of $\{z:{\rm Re} \,S(z)=0\}$ and $\{z:{\rm Im} \,T(z)=0\}$.  In Figure \ref{fig:1} the former set is depicted in the red lines and the latter set in the black curves.  The zero set of ${\rm Re}\,S$ is given explicitly by the union of $2n$ rays, i.e.,
$$ \{z:{\rm Re} \,S(z)=0\}=\bigcup_{k=-n+1}^{n}  \{r e^{i\pi k/n}:0\leq r<\infty\}.$$  
Therefore, to find the zeros of $p(z)+\overline{q(z)}$,
it is enough to find the number of intersections on each ray, i.e.
$$N_k:=\#\text{$\{r\in(0,\infty): {\rm Im}\,T\big(re^{i\pi  k/n}\big)=0\}$},$$ for each $k=-n+1,\cdots,n$. 

From the expression of $T$ in \eqref{ST}, we obtain $N_0=1$ and $N_n=n-1$ (counting the degeneracy).  We also obtain that ${\rm Im}\, T(re^{-i\pi k/n})={\rm Im}\, T(re^{i\pi k/n})$ and, therefore, $N_k = N_{-k}$.   As a result we only need to find $N_k$ for $k=1,\cdots,n-1$.

\begin{lemma} For $1\leq k\leq n-1$, $N_k$ is given by the number of zeros of $A:(0,\pi k/n)\to \RR$ where 
$$ A(\theta)=A(\theta;k)=\tan[(n-1) \theta]+ \frac{n-1}{\tan\theta}-n\cot\frac{\pi k}{n}.$$
\end{lemma}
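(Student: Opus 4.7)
The plan is to reduce the condition $\mathrm{Im}\,T(re^{i\pi k/n}) = 0$ on the ray to a scalar equation in a single angular parameter $\theta$, and then recognize that equation as $A(\theta)=0$ up to a positive prefactor. Set $\alpha := \pi k/n$ and parameterize the ray by $\theta := \arg(z+1)$. Writing $z+1 = \rho e^{i\theta}$ and equating $\rho e^{i\theta} - 1 = re^{i\alpha}$ (equivalently, applying the law of sines to the triangle with vertices $0,-1,z$) yields
\[
r = \frac{\sin\theta}{\sin(\alpha - \theta)}, \qquad \rho = \frac{\sin\alpha}{\sin(\alpha - \theta)}.
\]
Since $dr/d\theta = \sin\alpha/\sin^2(\alpha - \theta) > 0$ on $(0,\alpha)$, the correspondence $r \leftrightarrow \theta$ is a smooth monotone bijection from $(0,\infty)$ onto $(0,\alpha)$, so zero counts (with multiplicity) transfer faithfully between the two parameterizations.

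Next, writing $w := z+1$ so that $T = iw^n - in w^{n-1}$ and substituting $w = \rho e^{i\theta}$ gives the clean expression
\[
\mathrm{Im}\,T(z) = \rho^{n-1}\bigl[\rho\cos(n\theta) - n\cos((n-1)\theta)\bigr].
\]
Plugging $\rho = \sin\alpha/\sin(\alpha-\theta)$ into this, expanding $\sin(\alpha-\theta) = \sin\alpha\cos\theta - \cos\alpha\sin\theta$, and applying the addition formula $\cos(n\theta) = \cos\theta\cos((n-1)\theta) - \sin\theta\sin((n-1)\theta)$, one can collect terms so that $\sin\theta\cos((n-1)\theta)$ factors out of the numerator. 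After this factorization the remaining bracket is exactly $-\sin\alpha\cdot A(\theta)$, giving
\[
\mathrm{Im}\,T(z) = -\,\frac{\rho^{n-1}\sin\alpha\,\sin\theta\,\cos((n-1)\theta)}{\sin(\alpha-\theta)}\cdot A(\theta).
\]

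The prefactor multiplying $\cos((n-1)\theta)\,A(\theta)$ is strictly positive on $(0,\alpha)$, so zeros of $\mathrm{Im}\,T$ on the ray correspond, with multiplicity, to zeros of the product $\cos((n-1)\theta)\,A(\theta)$ on $(0,\pi k/n)$. At each pole of $A$ (where $\cos((n-1)\theta)=0$), the factor $\cos((n-1)\theta)$ cancels the simple pole of $\tan((n-1)\theta)$ and the product reduces to $\sin((n-1)\theta) = \pm 1 \ne 0$; these points therefore contribute no zeros. Consequently $N_k$ equals the number of zeros of $A$ on $(0,\pi k/n)$, as claimed.

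The main obstacle is simply carrying out the algebraic manipulation cleanly; the key observation is that $\sin\theta\,\cos((n-1)\theta)$ is a common factor of the numerator once $\cos(n\theta)$ is split via the addition formula, so that the remaining bracket can be recognized as $\tan((n-1)\theta) + (n-1)\cot\theta - n\cot\alpha$. One should also sanity-check the overall sign and the limit $\theta\to 0^+$, where the formula must reduce to $\mathrm{Im}\,T(0) = -(n-1)$, consistent with $T(0) = -i(n-1)$.
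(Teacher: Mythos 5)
Your proof is correct, and it takes a genuinely different route from the paper's. The paper works with the \emph{argument} of $T$: it writes $\arg T(re^{i\pi k/n})=(n-1)\theta(r)+\phi(r)$, applies the tangent addition formula, uses the relation $\tan\phi=\tfrac{n-1}{\tan\theta}-n\cot\tfrac{\pi k}{n}$ to identify the numerator of $\tan\arg T$ with $A(\theta)$, and then must separately note that $T\neq 0$ off the real axis and that the denominator of the addition formula does not vanish where the numerator does. You instead compute $\mathrm{Im}\,T$ \emph{directly} in the variable $\theta=\arg(z+1)$, exploiting the clean form $T=iw^{n}-inw^{n-1}$ with $w=z+1$, and arrive at the explicit real-analytic factorization
\[
\mathrm{Im}\,T(z)=-\,\frac{\rho^{n-1}\sin\alpha\,\sin\theta}{\sin(\alpha-\theta)}\cdot\cos\bigl((n-1)\theta\bigr)\,A(\theta),
\]
which I have checked (the numerator $\sin\alpha\cos(n\theta)-n\sin(\alpha-\theta)\cos((n-1)\theta)$ does factor as $-\sin\alpha\sin\theta\cos((n-1)\theta)\,A(\theta)$ after splitting $\cos(n\theta)$ and $\sin(\alpha-\theta)$ by the addition formulas). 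What your approach buys: the positivity of the prefactor on $(0,\alpha)$ makes the equivalence of zero sets, including multiplicities, completely transparent, and the pole cancellation ($\cos((n-1)\theta)A(\theta)=\pm 1$ at poles of $A$) replaces the paper's slightly informal remark about the non-vanishing denominator; the sanity check at $\theta\to 0^{+}$ is a nice touch. What the paper's approach buys is brevity and a formula, \eqref{tanargT}, that is reused implicitly in the subsequent sign analysis. Both arguments rely on the same monotone change of variables $r\leftrightarrow\theta$ mapping $(0,\infty)$ onto $(0,\pi k/n)$, which you justify more explicitly via $dr/d\theta=\sin\alpha/\sin^{2}(\alpha-\theta)>0$.
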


\begin{proof}
Let us evaluate
\begin{align*}
\arg T\big(re^{i\pi  k/n}\big)=(n-1)\,\theta(r)+ \phi(r) \mod 2\pi,
\end{align*}
where we define the two (angular) variables,
\begin{equation}\label{thetaphi}
\begin{split}
\theta(r)&=\theta(r;k)=\arg\big(re^{i\pi  k/n}+1\big),
   \\
\phi(r)&=\phi(r;k)=\arg\big(re^{i\pi k/n}-(n-1)\big)+\frac{\pi}{2}. \end{split}
\end{equation}
This gives 
\begin{align}\label{tanargT}
\tan \arg T\big(re^{i\pi  k/n}\big)&=\frac{\tan\big((n-1) \theta(r)\big)+ \tan\phi(r)}{1-\tan\big((n-1)\theta(r)\big)\, \tan\phi(r)}.
\end{align}
Using the identities
\begin{equation*}
\begin{split}
\tan\theta (r)&=\tan\arg\big(re^{i\pi  k/n}+1\big)
=\frac{r \sin \left(\frac{\pi   k}{n}\right)}{r \cos
   \left(\frac{\pi  k}{n}\right)+1},
   \\
\tan\phi(r)&=\tan\left(\arg\big(re^{i\pi k/n}-(n-1)\big)+\frac{\pi}{2}\right) = -\frac{r \cos
   \left(\frac{\pi k}{n}\right)-(n-1)}{r \sin \left(\frac{\pi k}{n}\right)},
   \end{split}
\end{equation*}
one can relate $\phi$ and $\theta$ by
$ \tan\phi(r)=\frac{n-1}{\tan\theta(r)}-n\cot\frac{\pi k}{n}$.  
Using this relation, the numerator of $\tan\arg T(re^{i\pi  k/n})$ in \eqref{tanargT} is written as
$$ \tan\big((n-1) \theta(r)\big)+ \tan\phi(r) = \tan\big((n-1) \theta(r)\big)+ \frac{n-1}{\tan\theta(r)}-n\cot\frac{\pi k}{n}, $$
which is exactly $A(\theta(r))$ as defined in the lemma.
Since we have ${\rm Im}\,T(z) =0$ (note $T(z)\neq 0$ away from the real axis) if and only if $\tan\arg T(z)=0$, $N_k$ is given by the number of zeros of $A(\theta(r))$ over $r\in(0,\infty)$.  It is simple to check that the denominator in \eqref{tanargT} does not vanish when the numerator vanishes.

Lastly, from \eqref{thetaphi}, one can see that $\theta(r)$ (the angle from $-1$ to a point on the straight ray parametrized by $r$) increases from zero to $\pi k/n$ monotonically as $r$ moves from zero to $\infty$.
\end{proof}

In the rest of the proof, we will use elementary argument (e.g. the mean value theorem and the intermediate value theorem) to count the zeros of $A(\theta)$.  See Figure \ref{fig:A} for some plots of $A$.
\begin{figure}\begin{center}
\includegraphics[width=0.4\textwidth]{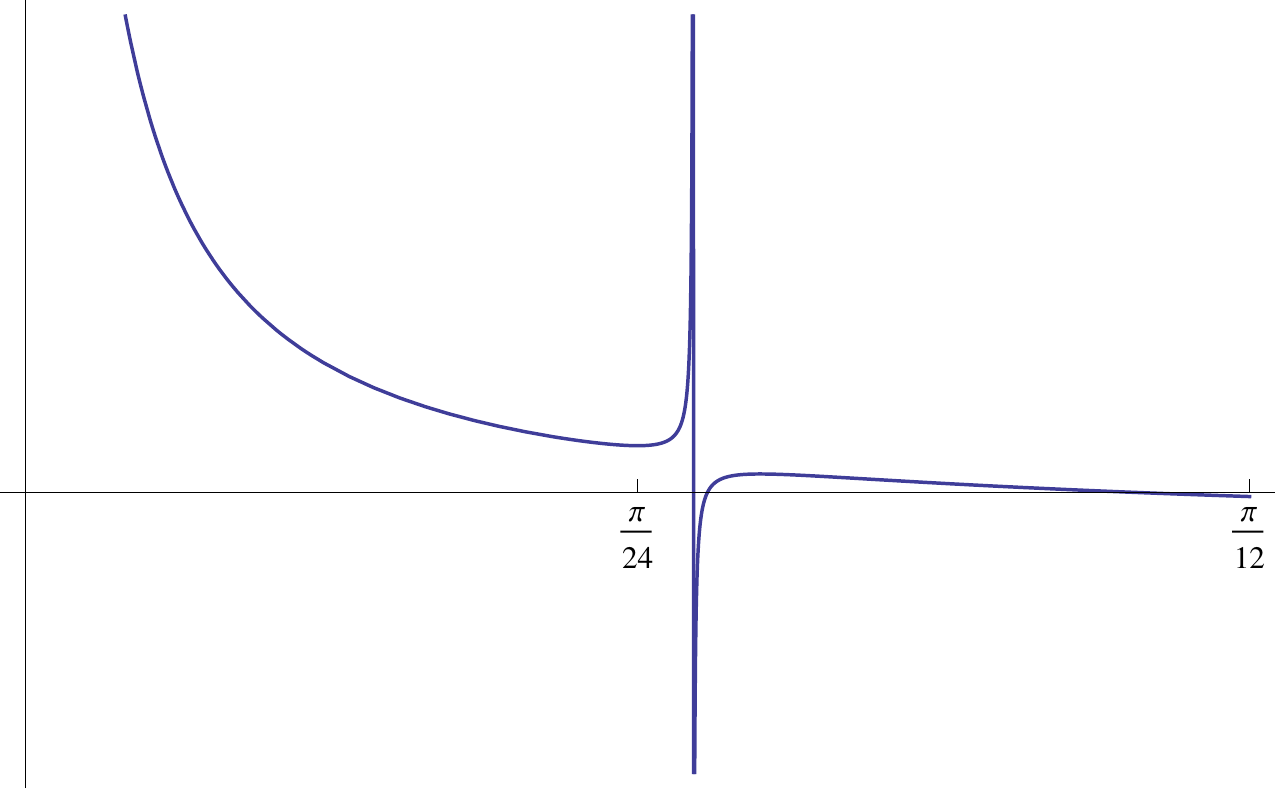}\qquad
\includegraphics[width=0.4\textwidth]{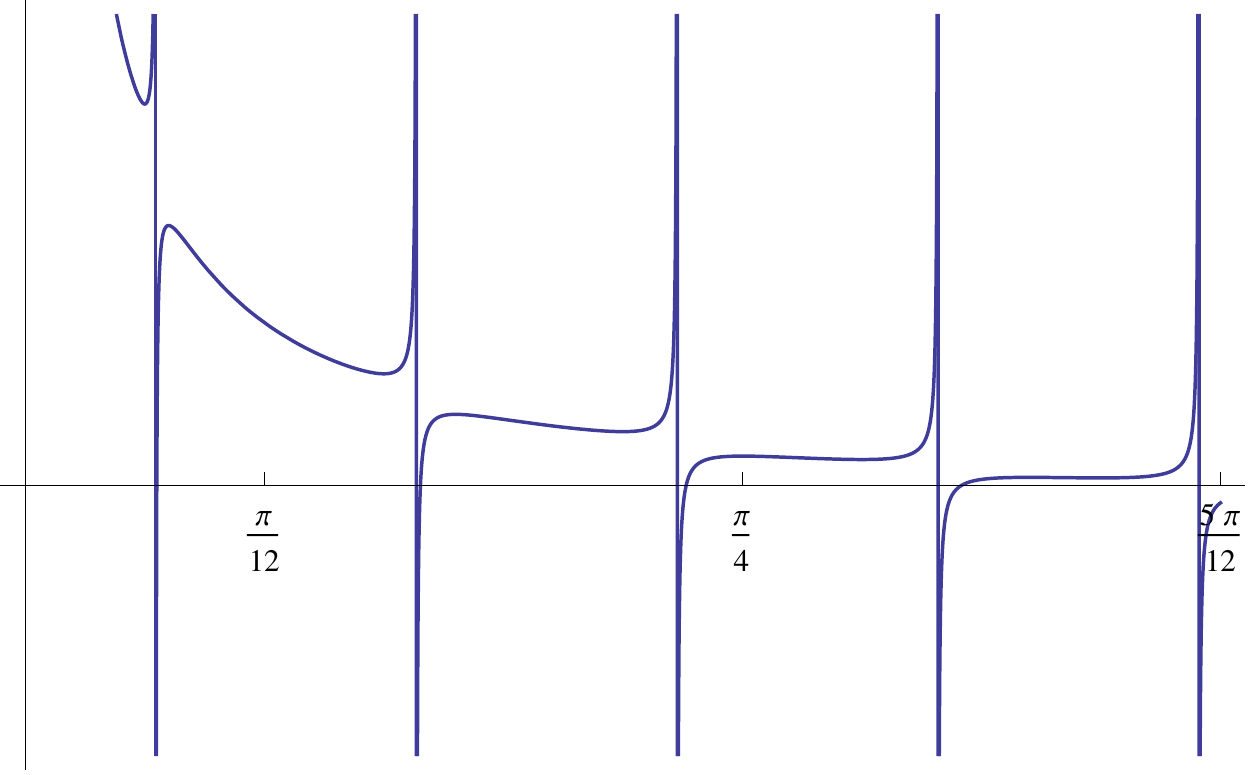}
\caption{\label{fig:A} Plots of $A(\theta)$ for $n=12$ and $k=1$ (left) and $k=5$ (right)}
\end{center}
\end{figure}

One notices that $A$ has simple poles (of negative residue) where $\tan[(n-1)\theta]$ has poles, i.e.,
$$ \theta=\frac{1/2}{n-1}\pi,\frac{3/2}{n-1}\pi,\cdots,\frac{k_\text{poles}-1/2}{n-1}\pi , $$
where $k_\text{poles}$ is the largest integer such that $\frac{k_\text{poles}-1/2}{n-1}<  \frac{k}{n}$.  We have
\begin{equation}\label{kpole}
 k_\text{poles} =k_\text{poles}(k) = \begin{cases} k\quad &\text{when~~} k < n/2, \\ k-1 \quad &\text{when~~} k \geq n/2.\end{cases} 
\end{equation}
We also get the boundary behavior of $A$: $\lim_{\theta\to 0+} A(\theta)=+\infty$ and
\begin{equation}\label{Aboundary}
A\Big(\frac{\pi k}{n}\Big) = -\tan\Big(\frac{\pi k}{n}\Big)-\cot\Big(\frac{\pi k}{n}\Big)=\displaystyle{-\frac{2}{\sin(2\pi k/n)}}=
\begin{cases}\leq 0\quad &\text{when~~} k < n/2, 
\\ >0 \quad &\text{when~~} k\geq n/2.  \end{cases}
\end{equation}
(When $k=n/2$, $\lim_{\theta\to \pi k/n} A(\theta)=+\infty$.)
To find the critical points of $A$, we evaluate
$$ A'(\theta) = (n-1) \bigg(\frac{1}{\cos^2[(n-1)\theta]}-\frac{1}{\sin^2\theta}\bigg), $$
that becomes zero when $$\cos[(n-1)\theta]=\pm \sin\theta \Longleftrightarrow  (n-1)\theta = \frac{\pi}{2}\pm\theta + \pi j ~ (\text{for~} j\in\ZZ),$$
or, equivalently, when
$$  \theta= \begin{cases}\vspace{0.2cm}  \displaystyle\frac{j-1/2}{n}\pi \text{~~ where ~~} j=1,\cdots,k, \\  \displaystyle \frac{j-1/2}{n-2}\pi  \text{~~ where ~~}  j=1,\cdots, k_\text{crit}. \end{cases} $$
We note that $k_\text{crit}$ is the largest integer such that $\frac{k_\text{crit}-1/2}{n-2} < \frac{k}{n}$ or, equivalently,
$$ k_\text{crit} =k_\text{crit}(k) = \begin{cases} k  &\text{when $k < n/4$,}\\  k-1 &\text{when $n/4\leq k < 3n/4$,} \\ k-2 &\text{when $k\geq 3n/4$.} \end{cases}  $$ 
The critical values are given by
\begin{align}\label{positive-crit}
\begin{cases}\displaystyle
A\left(\frac{j-1/2}{n-2}\pi\right)=(n-2)\cot\left(\frac{j-1/2}{n-2}\pi\right)  - n\cot \Big(\frac{\pi k}{n}\Big),
\vspace{0.2cm}\\\displaystyle
A\left(\frac{j-1/2}{n}\pi\right)=n\cot\left(\frac{j-1/2}{n}\pi\right)  - n\cot \Big(\frac{\pi k}{n}\Big)>0.
\end{cases}
\end{align}
The last inequality is from the monotonicity of $\cot(x)$ over $0<x<\pi$.

\begin{lemma}\label{lem-crit} For $1\leq k\leq n-1$ we have
$$ A\left(\frac{j-1/2}{n-2}\pi\right) >0 \text{~~ for ~~} 1\leq j\leq \min\{k_\text{crit},k-1\}.$$
\end{lemma}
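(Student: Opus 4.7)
Set $u = \pi k/n$ and $v = \theta_j := (2j-1)\pi/(2(n-2))$, so the target inequality is $(n-2)\cot v - n\cot u > 0$; the constraint $j \leq k_\text{crit}$ forces $v < u$. My plan is to split into two cases according to whether $\pi k/n$ lies above or below $\pi/2$.

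For $k \geq n/2$ the argument is elementary from signs alone. If $v \leq \pi/2$ then $\cot v \geq 0 \geq \cot u$, and strictness follows because $u > v$ rules out simultaneous equality $\cot u = \cot v = 0$. If instead $v > \pi/2$ then both cotangents are negative, and I would rewrite $(n-2)\cot v - n\cot u = (n-2)(\cot v - \cot u) - 2\cot u$; both summands are strictly positive, the first by monotonicity of $\cot$ on $(0,\pi)$ combined with $v < u$, the second since $\cot u < 0$.

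The substantive case is $k < n/2$, where both $u$ and $v$ lie in $(0, \pi/2)$. Clearing denominators using $\sin u,\sin v > 0$ and applying product-to-sum identities, the inequality becomes equivalent to $(n-1)\sin(u-v) > \sin(u+v)$. Setting $\varepsilon := u-v$, the elementary relations $2(n-2)v = (2j-1)\pi$ and $2nu = 2k\pi$ combine into the identity
$$ (u+v) + (n-1)(u-v) = \frac{(2(k-j)+1)\pi}{2},$$
which yields $\sin(u+v) = (-1)^{k-j}\cos((n-1)\varepsilon)$. So the inequality reduces to $(n-1)\sin\varepsilon > (-1)^{k-j}\cos((n-1)\varepsilon)$, and since the right-hand side has absolute value at most $1$, it suffices to prove $(n-1)\sin\varepsilon > 1$.

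Because $0 < \varepsilon < u < \pi/2$, Jordan's inequality $\sin\varepsilon \geq (2/\pi)\varepsilon$ applies, and the bound $(n-1)\sin\varepsilon > 1$ follows from $\varepsilon > \pi/(2(n-1))$. Direct computation gives $\varepsilon = \pi[n(2(k-j)+1) - 4k]/(2n(n-2))$, and the inequality $\varepsilon > \pi/(2(n-1))$ simplifies algebraically to $2(n-1)[n(k-j) - 2k] + n > 0$; this holds because $n(k-j) - 2k \geq n - 2k > 0$ under our assumptions $k < n/2$ and $k - j \geq 1$. The main obstacle is spotting the identity $\sin(u+v) = (-1)^{k-j}\cos((n-1)(u-v))$, which is tied to the specific position of the critical point $v = (2j-1)\pi/(2(n-2))$; once that is in hand, Case~B collapses to a one-line Jordan estimate.
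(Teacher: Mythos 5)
Your proof is correct, and it reaches the same analytic core as the paper's argument --- both rewrite $(n-2)\cot v - n\cot u$ as $\bigl((n-1)\sin(u-v)-\sin(u+v)\bigr)/(\sin u\sin v)$ and then force positivity via the Jordan-type bound $(n-1)\sin(u-v)>1\geq\sin(u+v)$ --- but the surrounding organization is genuinely different. The paper first reduces to the single extremal index $j=\min\{k_\text{crit},k-1\}$ by monotonicity of $\cot$ in $j$, and then splits on $k<3n/4$ versus $k\geq 3n/4$ with a further threshold at $C_n=\frac{n(2n-1)}{4(n-1)}$, arguing in the outer ranges that both sine terms are individually positive; this requires (somewhat glossed-over) checks that certain sine arguments land in the right half-periods. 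You instead prove the bound uniformly in $j$, splitting only at $k=n/2$: for $k\geq n/2$ a pure sign argument on the cotangents suffices (no trig identity needed), and for $k<n/2$ your algebraic criterion $2(n-1)\bigl[n(k-j)-2k\bigr]+n>0$, which follows at once from $k-j\geq 1$ and $2k\leq n-1$, settles every $j$ at a stroke. This buys a cleaner and more self-contained case analysis. One small remark: the identity $\sin(u+v)=(-1)^{k-j}\cos\bigl((n-1)(u-v)\bigr)$, which you single out as the main obstacle, is actually dispensable --- you only use it to conclude $|\text{RHS}|\leq 1$, which holds trivially for $\sin(u+v)$ itself --- so your Case B is even shorter than you advertise.
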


\begin{proof} Since $A\big(\frac{j-1/2}{n-2}\pi\big)$ is monotonic in $j$, it is enough to prove that
$$  A\left(\frac{k'-1/2}{n-2}\pi\right) =(n-2)\cot\left(\frac{k'-1/2}{n-2}\pi\right)  - n\cot \Big(\frac{\pi k}{n}\Big)  >0, $$
for $k'=\min\{k_\text{crit},k-1\}$.
Using the following identity,
\begin{equation}\label{identity}
(n-2) \cot\theta_1 - n \cot\theta_2  
=\frac{(n-1)\sin(\theta_2-\theta_1) -\sin(\theta_1+\theta_2)}{\sin\theta_1\sin\theta_2},
\end{equation}
the above inequality in question becomes
\begin{align*} F_1(k):=(n-1)\sin \left(\frac{3n-4k}{2n^2-4n}\pi \right)-\sin \left(\frac{k-3/2}{n-2}\pi+\frac{\pi k}{n}\right)>0 \text{ when $k< 3n/4$},   
\\
F_2(k):=(n-1)\sin \left(\frac{5n-4k}{2n^2-4n}\pi \right)-\sin \left(\frac{k-5/2}{n-2}\pi+\frac{\pi k}{n}\right)>0 \text{ when $k \geq 3n/4$}. 
\end{align*}
We have $F_1(k)>0$ when
$$ k > C_n:=\frac{n (2 n-1)}{4 (n-1)} $$
because both terms in $F_1(k)$ contribute positively (we defined $C_n$ such that the second term of $F_1(k)$ vanishes when $k=C_n$).  For $k\leq C_n$, since the first term in $F_1(k)$ decreases monotonically in $k$ and is given by (using $\sin x > \frac{2x}{\pi}$ for $0<x<\pi/2$)
$$ (n-1)\sin \frac{\pi}{2(n-1)}>1  \text{ ~~when $k=C_n$}, $$
$F_1(k)>0$ for all $k \leq C_n$.

Similarly, $F_2(k)>0$ for $k\geq 3n/4$ because both terms in $F_2(k)$ contribute positively.
\end{proof}

From \eqref{positive-crit} and Lemma \ref{lem-crit}, the only possible critical point $\theta$ such that $A(\theta)\leq 0$ occurs at
$$ \theta= \frac{k-1/2}{n-2}\pi,\quad k <\frac{n}{4}. $$
Note that this is bigger than $\frac{k-1/2}{n-1}\pi$ which is the location of the rightmost pole of $A$.
Since all the other critical values are positive, it follows that, between any successive poles of $A$ (except between $\theta=0$ and $\theta=\frac{\pi}{2(n-1)}$), there is exactly one root of $A$, by applying the intermediate value theorem and the mean value theorem.  Therefore, there are total $k_\text{poles}-1$ zeros of $A$ that are located to the left of the rightmost pole.   By applying the intermediate value theorem with \eqref{Aboundary}, for all $k\geq n/2$, there is at least one root of $A$ to the right of the rightmost pole of $A$.  
Combining with \eqref{kpole} there are at least $k-1$ roots of $A$ for {\em all} $1\leq k\leq n-1$.
 The total number of zeros of $p+\overline{q}$ is therefore at least 
\begin{equation}\label{small} n + 2\sum_{k=1}^{n-1} (k-1) =n^2-2n+2.\end{equation}
For $k< n/2$, again applying the intermediate value and the mean value theorem between the rightmost pole and $k\pi/n$, there are
\begin{equation*} \begin{cases}\displaystyle \text{no zero when $k\geq n/4$ or when $k<n/4$ and } A\Big(\frac{k-1/2}{n-2}\pi\Big)<0,  \\\displaystyle \text{two zeros when $k < n/4$ and } A\Big(\frac{k-1/2}{n-2}\pi\Big)>0.  \end{cases} \end{equation*}
Note that the second inequality is exactly the condition to define $k_\text{max}(n)$ in the main theorem.
Using the identity \eqref{identity}, the inequality, $A\big(\frac{k-1/2}{n-2}\pi\big)>0$, holds if and only if
$$(n-1)\sin \left(\frac{n-4k}{2n^2-4n}\pi \right)-\sin \left(\frac{k-1/2}{n-2}\pi+\frac{\pi k}{n}\right)>0. $$
For $k<n/4$, the left hand side is monotonically decreasing in $k$ and, therefore, the inequality is satisfied exactly for $k\leq k_\text{max}(n)$.  Therefore we get $4 k_\text{max}(n)$ more roots of $p+\overline{q}$ than \eqref{small}.  This proves our theorem except the statement about the asymptotic behavior.

In terms of the new parameter, $\gamma= k/n$, the left hand side of the above inequality becomes
\begin{align*}
(n-1)\sin \left(\frac{1-4\gamma}{2n-4}\pi \right)-\sin \left(\frac{n\gamma-1/2}{n-2}\pi+\gamma\pi\right)
=\frac{\pi}{2}-2\pi\gamma-\sin(2\pi\gamma) + {\mathcal O}\left(\frac{1}{n}\right),
\end{align*}
as $n\to\infty$.
The leading term in the above expression is positive when 
 $$\frac{k}{n}=\gamma < \frac{1}{4}-\frac{X}{2\pi}  
 $$
where $X\approx 0.739$ is the unique solution to the equation $X=\cos X$.  It implies that $\frac{k_\text{max}}{n} = \frac{1}{4}-\frac{X}{2\pi}+{\mathcal O}\left(\frac{1}{n}\right)$.

\end{document}